\newcommand{\mmp}{\mathbb{P}}
\newcommand{\tp}{\overset{{\mmp}}{\to}}
\newcommand{\me}{\mathbb{E}}
\newcommand{\mr}{\mathbb{R}}
\newcommand{\mn}{\mathbb{N}}
\renewcommand{\ln}{\log}
\newcommand{\be}{\begin{equation}}
\newcommand{\ee}{\end{equation}}
\DeclareMathOperator{\1}{\mathbbm{1}}
\newtheorem{theorem}{Theorem}[section]
\newtheorem{lemma}[theorem]{Lemma}
\newtheorem{rem}[theorem]{Remark}
\begin{document}

\title{A functional limit theorem for general shot noise processes}
\author{
Alexander Iksanov
    \footnote{Faculty of Computer Science and Cybernetics,
    Taras Shevchenko National University of Kyiv, 01601 Kyiv, Ukraine;
    \ e-mail: iksan@univ.kiev.ua} \ \ and \ \
Bohdan Rashytov
    \footnote{Faculty of Computer Science and Cybernetics,
    Taras Shevchenko National University of Kyiv, 01601 Kyiv, Ukraine;
    \ e-mail: mr.rashytov@gmail.com}}
\maketitle

\begin{abstract}
\noindent By a general shot noise process we mean a shot noise
process in which the counting process of shots is arbitrary
locally finite. Assuming that the counting process of shots
satisfies a functional limit theorem in the Skorokhod space with a
locally H\"{o}lder continuous Gaussian limit process and that the
response function is regularly varying at infinity we prove that
the corresponding general shot noise process satisfies a similar
functional limit theorem with a different limit process and
different normalization and centering functions. For instance, if
the limit process for the counting process of shots is a Brownian
motion, then the limit process for the general shot noise process
is a Riemann-Liouville process. We specialize our result for five
particular counting processes. Also, we investigate H\"{o}lder
continuity of the limit processes for general shot noise
processes.
\end{abstract}

\noindent Keywords: H\"{o}lder continuity; shot noise process; weak
convergence in the Skorokhod space

\noindent 2000 Mathematics Subject Classification: Primary: 60F17 \\
\hphantom{2000 Mathematics Subject Classification: }Secondary: 60G55

\section{Introduction and main result}

Let $(S_k)_{k \in \mn_0}$ be a not necessarily monotone sequence of positive
random variables. Define the counting process $(N(t))_{t \geq 0}$ by
$$ N(t):=\sum_{k \geq 0}\1_{\{S_k \leq t\}}, \quad t \geq 0,$$ where $\1_A=1$ if the event $A$ holds and $=0$, otherwise.
Throughout the paper we always assume that $N(t)<\infty$ almost surely
(a.s.) for $t \geq 0$.

Denote by $D:=D[0,\infty)$ the Skorokhod space of right-continuous
real-valued functions which are defined on $[0,\infty)$ and have
finite limits from the left at each positive point. For a function
$h\in D$, the random process $X:=(X(t))_{t \geq 0}$ which is the
main object of our investigation is given by $$X(t):=\sum_{k \geq
0}{h(t-S_k)}\1_{\{S_k \leq t\}}= \int_{[0,\,t]}{h(t-y)}{\textrm
d}N(y), \quad t \geq 0.$$ We call $X$ {\it general shot noise
process}, for no assumptions are imposed apart from $N(t)<\infty$
a.s. Plainly, $X\in D$ a.s.

Denote by $Y_1:=(Y_1(t))_{t\geq 0}$, $Y_2:=(Y_2(t))_{t\geq
0},\ldots$ independent and identically distributed (i.i.d.) random
processes with paths in $D$. Assume that, for $k\in\mn_0$,
$Y_{k+1}$ is independent of $(S_0,\ldots, S_k)$. In particular,
the case of complete independence of $(Y_j)_{j\in\mn}$ and
$(S_k)_{k\in\mn_0}$ is not excluded. Set $$Y(t):=\sum_{k\geq
0}Y_{k+1}(t-S_k)\1_{\{S_k\leq t\}},\quad t\geq 0$$ and call
$Y:=(Y(t))_{t\geq 0}$ {\it random process with immigration at
random times}. The interpretation is that associated with the
$k$th immigrant arriving at the system at time $S_{k-1}$ is the
process $Y_k$ which defines a model-dependent `characteristic'\,
of the $k$th immigrant. For instance, $Y_k(t-S_{k-1})$ may be the
fitness of the $k$th immigrant at time $t$. The value of $Y(t)$ is
then given by the sum of `characteristics' of all immigrants
arriving at the system up to and including time $t$. Assume that
the function $g(t):=\me Y_1(t)$ is finite for all $t\geq 0$, not
identically $0$ and that $g\in D$. To investigate weak convergence
of the process $Y$, properly normalized and centered, it is
natural to use decomposition
\begin{equation}\label{imm}
Y(t)=\sum_{k\geq 0}(Y_{k+1}(t-S_k)-g(t-S_k))\1_{\{S_k\leq t\}}+\sum_{k\geq
0}g(t-S_k)\1_{\{S_k\leq t\}}, \quad t\geq 0.
\end{equation}
For fixed $t>0$, while the first summand is the terminal value of a martingale, the second is the value at time $t$ of a general shot noise process.
The summands should be treated separately, for each of these requires a specific approach. Weak convergence of the first summand in \eqref{imm} will be
investigated in \cite{Dong+Iksanov:2019}.

In the present paper we are aimed at proving a functional limit
theorem for a general shot noise process $X$ under natural
assumptions. Besides being of independent interest our findings
pave the way towards controlling the asymptotic behavior of the
second summand in \eqref{imm}. These taken together with
prospective results from \cite{Dong+Iksanov:2019} should
eventually lead to understanding of the asymptotics of processes
$Y$.

A rich source of random processes $Y$ with immigration at random
times are queueing systems and various branching processes with or
without immigration. For example, particular instances of random
variables $Y(t)$ are given by the number of the $k$th generation
individuals ($k\geq 2$) with positions $\leq t$ in a branching
random walk; the number of customers served up to and including
time $t$ or the number of busy servers at time $t$ in a
$GEN/G/\infty$ queuing system, where GEN means that the arrival of
customers is regulated by a general point process. Nowadays rather
popular objects of research are queueing systems in which an input
process is more complicated than the renewal process, for
instance, a Cox process (also known as a doubly stochastic Poisson
process) \cite{Chernavskaya:2015} or a Hawkes process
\cite{Daw+Pender:2018,Koops+Saxena+Boxma+Mandjes:2018} and
branching processes with immigration governed by a process which
is more general than the renewal process, for instance, an
inhomogeneous Poisson process \cite{Yakovlev+Yanev:2007} or a Cox
process \cite{Butkovsky:2012}. Note that some authors investigated
the processes $X$, $Y$ or the like from purely mathematical
viewpoint. An incomplete list of relevant publications includes
the works \cite{Daley:1972, Rice:1977, Schmidt:1985,
Westcott:1976} and the recent article \cite{Pang+Zhou:2018}. On
the other hand, we stress that the results obtained in the
aforementioned papers do not overlap with ours.

The present work was preceded by several articles
\cite{Iksanov:2013,Iksanov+Kabluchko+Marynych:2016,
Iksanov+Kabluchko+Marynych+Shevchenko:2016,Iksanov+Marynych+Meiners:2014,
Kabluchko+Marynych:2016} in which weak convergence of renewal shot
noise processes has been investigated. The latter processes is a
particular case of processes $X$ in which the input sequence
$(S_k)_{k\in\mn_0}$ is a standard random walk. Development of
elements of the weak convergence theory for renewal shot noise
processes was motivated by and effectively used for the asymptotic
analysis of various characteristics of several random regenerative
structures: the order of random permutations
\cite{Gnedin+Iksanov+Marynych:2012}, the number of zero and
nonzero blocks of weak random compositions
\cite{Alsmeyer+Iksanov+Marynych:2017,
Iksanov+Marynych+Vatutin:2015}, the number of collisions in
coalescents with multiple collisions
\cite{Gnedin+Iksanov+Marynych+Moehle:2018}, the number of busy
servers in a $G/G/\infty$ queuing system
\cite{Iksanov+Jedidi+Bouzzefour:2018}, random process with
immigration at the epochs of a renewal process
\cite{Iksanov+Marynych+Meiners:2017a,Iksanov+Marynych+Meiners:2017b}.
Chapter 3 of the monograph \cite{Iksanov:2016} provides a survey
of results obtained in the aforementioned articles, pointers to
relevant literature and a detailed discussion of possible
applications.

To formulate our main result we need additional notation. Denote
by $W_\alpha:=(W_\alpha(u))_{u\geq 0}$ a centered Gaussian process
which is a.s.\ locally H\"{o}lder continuous with exponent
$\alpha> 0$ and satisfies $W_\alpha(0)=0$ a.s. In particular, for
all $T>0$, all $0\leq x,y\leq T$ and some a.s.\ finite random
variable $M_T$
\begin{equation}\label{hold}
|W_\alpha(x)-W_\alpha (y)| \leq M_T |x-y|^\alpha.
\end{equation}
Define the random process
$Y_{\alpha,\rho}:=(Y_{\alpha,\rho}(u))_{u\geq 0}$ by
\begin{equation}    \label{defYpos}
Y_{\alpha,\rho}(u):= \rho \int_0^u (u-y)^{\rho-1}
W_{\alpha}(y)\textrm{d}y, \quad u>0,\quad
Y_{\alpha,\rho}(0):=\lim_{u\to +0}Y_{\alpha,\rho}(u),
\end{equation}
when $\rho>0$ and by
\begin{align}
Y_{\alpha,\rho}(u):=u^\rho W_\alpha(u) + |\rho| \int_0^u
(W_{\alpha}(u) - W_{\alpha}(u-y)) y^{\rho-1}\textrm{d}y,  \quad u>
0,\quad Y_{\alpha,\rho}(0):=\lim_{u\to +0}Y_{\alpha,\rho}(u),
\label{defYneg}
\end{align}
when $-\alpha<\rho<0$. Also, put $Y_{\alpha,0}=W_\alpha$. Using
\eqref{hold} we conclude that $Y_{\alpha,\rho}(0)=0$ a.s.\
whenever $\rho>-\alpha$.

Convergence of the integrals in \eqref{defYpos} and
\eqref{defYneg} and a.s.\ continuity of the processes
$Y_{\alpha,\rho}$ will be proved in Lemma \ref{Ybeta} below. When
$W_\alpha$ is a Brownian motion (so that $\alpha=1/2-\varepsilon$
for any $\varepsilon\in (0,1/2)$), the process $Y_{\alpha,\rho}$
can be represented as a Skorokhod integral
\begin{equation}    \label{defY}
Y_{\alpha,\rho}(u):=\int_{[0,\,u]}(u-y)^\rho
\textrm{d}W_\alpha(y), \quad u \geq 0.
\end{equation}
The so defined process is called {\it Riemann–Liouville process}
or {\it fractionally integrated Brownian motion} with exponent
$\rho$ for $\rho>-1/2$. Since these processes appear for several
times in our presentation we reserve a special notation for them,
$R_\rho$. When $W_\alpha$ is a more general Gaussian process
satisfying the standing assumptions, the process $Y_{\alpha,\rho}$
may be called {\it fractionally integrated Gaussian process}. Note
that, for positive integer $\rho$, the process $Y_{\alpha,\rho}$
is up to a multiplicative constant an $r$-times integrated process
$W_{\alpha}$. This can be easily checked with the help of
integration by parts.

Throughout the paper we assume that the spaces $D$ and $D\times D$
are endowed with the $J_1$-topology and denote weak convergence in
these spaces by $\stackrel{J_{1}}\implies$ and
$\stackrel{J_1(D\times D)}\implies$, respectively. Comprehensive
information concerning the $J_1$-topology can be found in the
books \cite{Billingsley, Jacod+Shiryaev:2003}. In what follows we
use the notation $\mr^+:=[0,\infty)$.
\begin{theorem}\label{main}
Let $h \in D$ be an eventually nondecreasing function of bounded
variation which is regularly varying\footnote{This means that
$\lim_{t\to\infty}(h(tx)/h(t))=x^\beta$ for all $x>0$.} at
$\infty$ of index $\beta\geq 0$. Assume that
$\lim_{t\to\infty}h(t)=\infty$ when $\beta=0$ and that
\begin{equation}\label{basic}
\frac{N(t\cdot)-b(t\cdot)}{a(t)} \stackrel{J_{1}} \implies
W_{\alpha}(\cdot), \quad t \to \infty,
\end{equation}
where $a: \mr^+\to\mr^+$ is regularly varying at $\infty$ of
positive index, and $b:\mr^+\to\mr^+$ is a nondecreasing function.
Then
$$\frac{X(t\cdot)-\int_{[0,\,t \cdot]}h(t \cdot-y){\rm d} b(y)}{a(t) h(t)}
\stackrel{J_{1}} \implies Y_{\alpha,\beta}(\cdot),
\quad t \to \infty.$$
\end{theorem}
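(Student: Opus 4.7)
The plan is to rewrite the normalized, centered process as a Lebesgue--Stieltjes integral of a regularly varying kernel against the centered counting process, and then transfer convergence from $N - b$ to the shot noise integral via integration by parts and the uniform convergence theorem for regularly varying functions.

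First, via the change of variable $y = tv$,
$$\frac{X(tu) - \int_{[0,\,tu]} h(tu - y)\,{\textrm d}b(y)}{a(t) h(t)} = \int_{[0,\,u]} \frac{h(t(u - v))}{h(t)}\, {\textrm d}M_t(v), \qquad M_t(v) := \frac{N(tv) - b(tv)}{a(t)},$$
and \eqref{basic} is precisely $M_t \stackrel{J_{1}} \implies W_\alpha$. Since $W_\alpha$ is a.s.\ continuous, I would pass to the Skorokhod representation, on which $M_t \to W_\alpha$ locally uniformly on $\mr^+$ almost surely. Because $h$ is of bounded variation, integration by parts together with the substitution $w = u - v$ gives
$$I_t(u) := \int_{[0,\,u]} \frac{h(t(u - v))}{h(t)}\, {\textrm d}M_t(v) = \frac{h(0)}{h(t)}\, M_t(u) + \int_{[0,\,u]} M_t\bigl((u - w)^-\bigr)\, \frac{{\textrm d}h(tw)}{h(t)}.$$
The boundary term vanishes as $t \to \infty$: immediate when $\beta > 0$, and by the standing assumption $h(t) \to \infty$ when $\beta = 0$.

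The core step is the passage to the limit in the remaining integral. By Karamata's uniform convergence theorem, $h(tw)/h(t) \to w^\beta$ locally uniformly on $(0, \infty)$. When $\beta > 0$, this together with $h(0)/h(t) \to 0$ yields vague convergence of the measures ${\textrm d}h(tw)/h(t)$ on $(0, \infty)$ to $\beta w^{\beta - 1}\,{\textrm d}w$; combined with the uniform convergence $M_t \to W_\alpha$ on compacts and the reverse substitution $v = u - w$ one gets
$$I_t(u) \longrightarrow \beta \int_0^u W_\alpha(u - w)\, w^{\beta - 1}\, {\textrm d}w = \beta \int_0^u W_\alpha(v)\,(u - v)^{\beta - 1}\,{\textrm d}v = Y_{\alpha,\beta}(u),$$
in accord with \eqref{defYpos}. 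In the slowly varying case $\beta = 0$, the mass of ${\textrm d}h(tw)/h(t)$ instead concentrates near $w = 0$ with total mass tending to $1$, and since $M_t((u - w)^-) \to W_\alpha(u)$ uniformly for small $w$, the limit is $W_\alpha(u) = Y_{\alpha,0}(u)$. Continuity of $Y_{\alpha,\beta}$ (Lemma \ref{Ybeta}) together with locally uniform convergence on the representation space then translates back to $J_1$-weak convergence in $D$.

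The main obstacle is controlling the integral near $w = 0$, where uniform convergence of $h(t\cdot)/h(t)$ fails. My plan is to truncate at $w \in [\delta, u]$, bound the residual on $[0, \delta]$ via Potter's inequality for $h$ together with the a.s.\ finite supremum of $|W_\alpha|$ on $[0, u]$, and let $\delta \to 0$ after $t \to \infty$. Uniformity in $u$ across compact sets will follow from equicontinuity of the family $\{I_t\}$, in turn controlled by the local Hölder continuity \eqref{hold} of $W_\alpha$ transported through the kernel; the slowly varying regime $\beta = 0$ requires in addition a Potter-type estimate ensuring that $(h(tw_1) - h(tw_2))/h(t) \to 0$ uniformly for $w_1, w_2$ bounded away from $0$, which is exactly what justifies the mass-concentration argument.
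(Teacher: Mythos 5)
Your proposal is correct in outline and shares the paper's skeleton (reduction via regular variation of $h$, Skorokhod representation for \eqref{basic}, integration by parts, and the H\"{o}lder bound \eqref{hold} for the modulus of continuity), but it takes a genuinely different route at the central step. The paper splits the integral into $\int(\hat W_\alpha^{(t)}-\hat W_\alpha)\,{\rm d}(-h_t(\cdot-y))$, which it kills a.s.\ by the total-mass bound $h_t(T)\to T^\beta$, plus the deterministic transform $\int_{(0,\cdot]}W_\alpha(y)\,{\rm d}(-h_t(\cdot-y))$ of the \emph{fixed} limit process, whose convergence to $Y_{\alpha,\beta}$ it then establishes only \emph{weakly}: finite-dimensional convergence is reduced, via Gaussianity, to convergence of covariances (Lemma \ref{EZuZv}, which also invokes Adler's bound on $\me(\sup W_\alpha)^2$), and tightness is proved separately. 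You instead prove a.s.\ pathwise convergence of the same integrals on the representation space, by viewing ${\rm d}h(tw)/h(t)$ as finite measures converging weakly on compacts to $\beta w^{\beta-1}{\rm d}w$ (to $\delta_0$ when $\beta=0$) and integrating the bounded continuous path $w\mapsto W_\alpha(u-w)$ against them (Helly--Bray), with your $\delta$-truncation handling the region where the uniform convergence theorem fails. This is more elementary, bypasses Lemma \ref{EZuZv} and the Gaussian structure entirely for the identification of the limit (Gaussianity then enters only through the assumed H\"{o}lder continuity), and yields the formally stronger conclusion of locally uniform convergence in probability; the paper's covariance route has the side benefit of exhibiting the limit law explicitly. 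Two details you should still supply: (i) the preliminary reduction to $h$ nondecreasing with $h(0)=0$, since on $[0,t_0]$ the measure ${\rm d}h(t\cdot)/h(t)$ is signed --- its total variation is $\mathbb{V}_0^{t_0}(h)/h(t)\to 0$ because $h(t)\to\infty$ in both regimes, which is exactly the paper's opening step; and (ii) in the equicontinuity estimate, for $w\in(v,u]$ the argument $u-w$ ranges over $[0,u-v)$, so you should bound $|M_t((u-w)^-)|$ by $M_T(u-v)^\alpha$ plus the uniform error (equivalently, extend $W_\alpha\equiv 0$ on the negative half-line as in \eqref{sam23}); the crude bound $\sup|M_t|\cdot\bigl(h(tu)-h(tv)\bigr)/h(t)$ alone does not give a modulus of continuity near $u=v$ when $\beta=0$, where that mass tends to $1$ rather than to $0$.
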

\begin{rem}
A perusal of the proof given below reveals that the assumption
$\lim_{t\to\infty}h(t)=\infty$ when $\beta=0$ is not needed if $h$
is nondecreasing on $\mr^+$ rather than eventually nondecreasing.
\end{rem}

Since $b$ is nondecreasing and $h$ is locally bounded and almost
everywhere continuous function (in view of $h \in D$), the
integral $\int_{[0,\,t]}h(t-y){\rm d} b(y)$ exists as a
Riemann-Stieltjes integral.

The remainder of the article is organized as follows. In Section
\ref{hold2} we investigate local H\"{o}lder continuity of the
limit processes in Theorem \ref{main}. In Section \ref{appl} we
give five specializations of Theorem \ref{main} for particular
sequences $(S_k)_{k\in\mn_0}$. Finally, we prove Theorem
\ref{main} in Section \ref{pro}.

\section{H\"{o}lder continuity of the limit processes}\label{hold2}

For the subsequent presentation, it is convenient to define the
process $W_\alpha$ on the whole line. To this end, put
$W_\alpha(x)=0$ for $x<0$. The right-hand side of \eqref{defYneg}
can then be given in an equivalent form
\begin{equation}\label{equiv}
Y_{\alpha,\rho}(u)= |\rho| \int_0^\infty (W_\alpha (u) -
W_\alpha(u-y)) y^{\rho-1}\textrm{d}y,\quad u>0.
\end{equation}
It is important for us that formula \eqref{hold} still holds true
for negative $x,y$. More precisely, we claim that, for all $T>0$,
all $-\infty<x,y\leq T$ and the same random variable $M_T$ as in
\eqref{hold},
\begin{equation}\label{sam23}
|W_\alpha(x)-W_\alpha(y)|\leq M_T|x-y|^\alpha.
\end{equation}
This inequality is trivially satisfied in the case $x\vee y\leq 0$
and follows from \eqref{hold} in the case $x\wedge y\geq 0$.
Assume now that $x\wedge y\leq 0<x\vee y$. Then
$|W_\alpha(x)-W_\alpha(y)|=|W_\alpha(x\vee y)|\leq M_T (x\vee
y)^\alpha\leq M_T|x-y|^\alpha$. Here, the first inequality is a
consequence of \eqref{hold} with $y=0$.
\begin{lemma} \label{Ybeta}
Let $\rho>-\alpha$. The following assertions hold:\newline 1)
$|Y_{\alpha,\rho}(u)|<\infty$ a.s.\ for each fixed $u>0$;\newline
2) the process $Y_{\alpha,\rho}$ is a.s.\ locally H\"{o}lder
continuous with exponent $\min(1, \, \alpha + \rho)$ if
$\alpha+\rho \neq 1$ and with arbitrary positive exponent less
than $1$ if $\alpha+\rho=1$; more precisely, in the latter
situation we have, for any $T^\ast>T$,
$$\sup_{0 \leq u \neq v \leq T}
\frac{|Y_{\alpha,\rho}(u)-Y_{\alpha,\rho}(v)|}{|u-v| \log (T^\ast |u-v|^{-1})} <
\infty \quad \text{{\rm a.s.}}$$
\end{lemma}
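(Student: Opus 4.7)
The plan is to handle the cases $\rho>0$ and $-\alpha<\rho<0$ separately, in both parts, using \eqref{hold} and its extension \eqref{sam23} as essentially the only nontrivial input.

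For Part 1 and $\rho>0$, applying \eqref{hold} with $y=0$ gives $|W_\alpha(y)|\leq M_T y^\alpha$ on $[0,u]$; the integrand in \eqref{defYpos} is then dominated by $M_T y^\alpha(u-y)^{\rho-1}$, integrable as a Beta function equal to $M_T u^{\alpha+\rho} B(\alpha+1,\rho)$. For $-\alpha<\rho<0$, I use the equivalent form \eqref{equiv} and split at $y=u$. On $(0,u]$ the bound $|W_\alpha(u)-W_\alpha(u-y)|\leq M_T y^\alpha$ gives integrand $\leq M_T y^{\alpha+\rho-1}$, integrable since $\alpha+\rho>0$; on $(u,\infty)$ the extended $W_\alpha(u-y)$ vanishes, leaving the convergent integral $W_\alpha(u)\int_u^\infty y^{\rho-1}\,dy$.

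For Part 2, fix $0\leq u<v\leq T$ and set $\delta=v-u$. In the case $\rho>0$, substituting $s=v-y$ (resp.\ $s=u-y$), shifting the first domain by $\delta$, and adding/subtracting $W_\alpha(u)$ inside each integrand yields the decomposition
\begin{equation*}
Y_{\alpha,\rho}(v)-Y_{\alpha,\rho}(u) = W_\alpha(u)(v^\rho-u^\rho) + \rho\int_0^\delta s^{\rho-1}[W_\alpha(v-s)-W_\alpha(u)]\,ds + \rho\int_0^u \bigl[(s+\delta)^{\rho-1}-s^{\rho-1}\bigr][W_\alpha(u-s)-W_\alpha(u)]\,ds,
\end{equation*}
the $W_\alpha(u)$-constants collapsing because $\delta^\rho+(u+\delta)^\rho-\delta^\rho-u^\rho=v^\rho-u^\rho$. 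The boundary term is controlled by $|W_\alpha(u)|\cdot|v^\rho-u^\rho|\leq M_T u^\alpha\min(\delta^\rho,\rho u^{\rho-1}\delta)$ (suitably modified for $\rho\geq 1$); distinguishing $u\leq\delta$ from $u>\delta$ yields a bound of the form $C_T M_T\delta^{\min(1,\alpha+\rho)}$. The middle integral, estimated via $|W_\alpha(v-s)-W_\alpha(u)|\leq M_T(\delta-s)^\alpha$, is a convergent Beta integral of size $CM_T\delta^{\alpha+\rho}$.

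The third integral is the decisive one. Using $|W_\alpha(u-s)-W_\alpha(u)|\leq M_T s^\alpha$ together with the pointwise bounds $|(s+\delta)^{\rho-1}-s^{\rho-1}|\leq s^{\rho-1}$ for $s\leq\delta$ and $|(s+\delta)^{\rho-1}-s^{\rho-1}|\leq|1-\rho|\,s^{\rho-2}\delta$ for $s\geq\delta$ (with the analogous mean-value estimate for $\rho\geq 1$), splitting at $s=\delta$ reduces matters to the integral $\int_\delta^u s^{\alpha+\rho-2}\,ds$. This integral is $O(\delta^{\alpha+\rho-1})$ when $\alpha+\rho<1$, bounded when $\alpha+\rho>1$, and equals $\log(u/\delta)\leq\log(T^\ast/\delta)$ precisely when $\alpha+\rho=1$, producing the claimed H\"older exponent $\min(1,\alpha+\rho)$ and the logarithmic factor in the critical case. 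The case $-\alpha<\rho<0$ is handled analogously from \eqref{defYneg}, yielding the same type of three-term decomposition and the same critical integral. The main obstacle is extracting the sharp exponent $\min(1,\alpha+\rho)$ rather than the naive $\alpha$ that a uniform H\"older bound on $W_\alpha$-differences would give; this requires exposing the cancellations in the decomposition above, after which the logarithm at $\alpha+\rho=1$ emerges precisely from the non-integrability of $s^{-1}$ at one endpoint.
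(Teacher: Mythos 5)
Your proposal is correct and takes essentially the same route as the paper: after the substitution $s=v-y$ and relabeling of $u,v$, your three-term decomposition for $0<\rho<1$ is exactly the paper's $I_1+I_2+I_3$ split, and the subsequent estimates (the Beta integrals, the mean value bound on the kernel difference, and the split at $s=\delta$ that produces $\int_\delta^u s^{\alpha+\rho-2}\,\mathrm{d}s$ and hence the logarithm when $\alpha+\rho=1$) coincide with those in the text. The only cosmetic differences are that the paper treats $\rho\geq 1$ by a direct monotonicity argument rather than folding it into the same decomposition, and handles $-\alpha<\rho<0$ via the representation \eqref{equiv} with a four-way split of the integrand.
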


\begin{proof}
The case $\rho=0$ is trivial. Fix $T>0$.

\noindent {\sc Proof of 1)}. Using \eqref{hold} we obtain, for all
$u\in [0,T]$,
$$|Y_{\alpha,\rho}(u)|\leq \rho \int_0^u (u-y)^{\rho-1}|W_\alpha(y)|{\rm
d}y \leq M_T\rho \int_0^u (u-y)^{\rho-1}y^\alpha {\rm
d}y=M_T\rho{\rm B}(\rho,\alpha+1)u^{\rho+\alpha}< \infty\quad
\text{a.s.}$$ in the case $\rho>0$ and
\begin{eqnarray*}
|Y_{\alpha,\rho}(u)|
&\leq& u^\rho |W_\alpha(u)| + |\rho| \int_0^u |W_\alpha (u) - W_\alpha(u-y))
y^{\rho-1}\textrm{d}y \\
&\leq& M_T u^{\rho+\alpha} + |\rho|
M_T(\rho+\alpha)^{-1}u^{\rho+\alpha}= M_T \alpha (\rho +
\alpha)^{-1} u^{\rho+\alpha} < \infty\quad \text{a.s.}
\end{eqnarray*}
in the case $-\alpha<\rho<0$.

\noindent {\sc Proof of 2)}. By virtue of symmetry it is enough to
investigate the case $0\leq v<u\leq T$, and this is tacitly
assumed throughout the proof.

Assume first that $-\alpha < \rho < 0$. Appealing to \eqref{sam23}
and \eqref{hold} we conclude that, for $v>0$,
\begin{eqnarray*}
|\rho|^{-1}\left|Y_{\alpha,\rho}(u) - Y_{\alpha,\rho}(v)\right|
&=& \left| \int_0^\infty (W_\alpha(u) - W_\alpha(u-y) - W_\alpha(v) + W_\alpha(v-y))y^{\rho-1}\textrm{d}y \right| \\
&\leq& \int_0^{u-v} \left|W_\alpha(u) - W_\alpha(u-y)\right|y^{\rho-1}\textrm{d}y \\
&+& \int_0^{u-v} \left|W_\alpha(v) - W_\alpha(v-y)\right|y^{\rho-1}\textrm{d}y \\
&+& \int_{u-v}^\infty \left|W_\alpha(u) - W_\alpha(v)\right|y^{\rho-1}\textrm{d}y \\
&+& \int_{u-v}^\infty \left|W_\alpha(u-y) - W_\alpha(v-y)\right|y^{\rho-1}\textrm{d}y \\
&\leq& 2M_T \left( \int_0^{u-v} y^{\rho - 1 + \alpha}\textrm{d}y
+ (u-v)^\alpha \int_{u-v}^\infty y^{\rho - 1}\textrm{d}y\right) \\
&=& 2M_T \alpha (|\rho|(\rho+\alpha))^{-1}
(u-v)^{\rho+\alpha}\quad\text{a.s.}
\end{eqnarray*}
We already know from the proof of part 1) that a similar
inequality holds when $v=0$. Thus, the claim of part 2) has been
proved in the case $-\alpha<\rho<0$.

Assume now that $\rho \geq 1$. We infer with the help of
\eqref{hold} that
\begin{eqnarray*}
\left|Y_{\alpha,\rho}(u) - Y_{\alpha,\rho}(v)\right| &=&
\left|\rho \int_0^v((u-y)^{\rho-1}-(v-y)^{\rho-1})
W_{\alpha}(y)\textrm{d}y
+ \rho \int_v^u(u-y)^{\rho-1} W_{\alpha}(y)\textrm{d}y \right| \\
&\leq& \rho \int_0^v((u-y)^{\rho-1}-(v-y)^{\rho-1})
\left|W_{\alpha}(y)\right|\textrm{d}y
+ \rho \int_v^u(u-y)^{\rho-1} \left|W_{\alpha}(y)\right|\textrm{d}y \\
&\leq& \rho M_T \int_0^v((u-y)^{\rho-1}-(v-y)^{\rho-1}) y^\alpha
\textrm{d}y
+ \rho M_T \int_v^u(u-y)^{\rho-1} y^\alpha \textrm{d}y  \\
&=& \rho M_T \int_0^u (u-y)^{\rho-1} y^\alpha \textrm{d}y
- \rho M_T \int_0^v (v-y)^{\rho-1} y^\alpha \textrm{d}y \\
&=& \rho M_T {\rm B}(\rho,\alpha+1) (u^{\rho + \alpha}-v^{\rho +
\alpha})\\&\leq& \rho (\rho+\alpha)M_T {\rm
B}(\rho,\alpha+1)T^{\rho+\alpha-1}(u-v) \quad\text{a.s.},
\end{eqnarray*}
where the last inequality follows from the mean value theorem for
differentiable functions.

It remains to investigate the case $0 < \rho < 1$. We shall use
the following decomposition
\begin{eqnarray*}
\left|Y_{\alpha,\rho}(u) - Y_{\alpha,\rho}(v)\right| &=&
\left|\rho \int_0^u(u-y)^{\rho-1} W_{\alpha}(y)\textrm{d}y
- \rho \int_0^v(v-y)^{\rho-1} W_{\alpha}(y)\textrm{d}y \right| \\
&=& \left | \rho \int_0^v (W_{\alpha}(v) - W_{\alpha}(v-y))(y^{\rho-1}-(y+u-v)^{\rho-1})\textrm{d}y \right. \\
&-& \left. \rho \int_0^{u-v}
(W_\alpha(v)-W_{\alpha}(u-y))y^{\rho-1}\textrm{d}y
+ W_\alpha(v)(u^\rho - v^\rho) \right| \\
&\leq& I_1 + I_2+I_3,
\end{eqnarray*}
where $$I_1:=\rho \int_0^v |W_{\alpha}(v) -
W_{\alpha}(v-y)|(y^{\rho-1}-(y+u-v)^{\rho-1})\textrm{d}y,$$
$$I_2:= \rho \int_0^{u-v}
|W_\alpha(v)-W_{\alpha}(u-y)|y^{\rho-1}\textrm{d}y\quad\text{та}\quad
I_3:=|W_\alpha(v)|(u^\rho - v^\rho).$$ The summand $I_1$ can be
estimated as follows
\begin{eqnarray*}
I_1 &\leq& \rho M_T \int_0^v y^\alpha(y^{\rho-1}-(y+u-v)^{\rho-1})\textrm{d}y \\
&=& \rho M_T (u-v)^{\alpha + \rho} \int_0^{v/(u-v)}
t^\alpha(t^{\rho-1}-(t+1)^{\rho-1})\textrm{d}t.
\end{eqnarray*}
Using the inequality $x^{\rho-1} - (x+1)^{\rho-1} \leq
(1-\rho)x^{\rho-2}$ for $x>0$ gives
\begin{eqnarray*}
I_1&\leq& \rho (1-\rho)M_T (u-v)^{\alpha + \rho} \int_0^{v/(u-v)}
t^{\alpha+\rho-2}{\rm d}t \\&=& \rho
(1-\rho)(\alpha+\rho-1)^{-1}M_T v^{\alpha+\rho-1} (u-v)\\&\leq&
\rho (1-\rho)(\alpha+\rho-1)^{-1}M_T T^{\alpha+\rho-1} (u-v) in
\end{eqnarray*}
in the case $\alpha+\rho>1$ and
\begin{eqnarray*}
I_1 &\leq& \rho M_T(u-v)^{\alpha+\rho} \left(\int_0^1
t^\alpha(t^{\rho-1}-(t+1)^{\rho-1})\textrm{d}t
+ \int_1^\infty t^\alpha(t^{\rho-1}-(t+1)^{\rho-1})\textrm{d}t\right) \\
&\leq& \rho M_T(u-v)^{\alpha+\rho} \left(\int_0^1 t^{\alpha +
\rho-1}\textrm{d}t+(1 - \rho) \int_1^\infty t^{\alpha + \rho-2} \textrm{d}t\right) \\
&=& \rho M_T \left(\frac{1}{\alpha+\rho} +
\frac{1-\rho}{1-\alpha-\rho}\right) (u-v)^{\alpha+\rho}
\end{eqnarray*}
in the case $0<\alpha+\rho<1$. Also,
\begin{eqnarray*}
I_1 &\leq& \rho M_T(u-v) \int_0^{v/(u-v)} \left(1-\left(\frac{t}{t+1}\right)^\alpha \right)\textrm{d}t \\
&\leq& \rho M_T(u-v) \int_0^{v/(u-v)} \frac{\textrm{d}t}{t+1} \\
&=& \rho M_T(u-v) \ln \frac{u}{u-v} \\
&\leq& \rho M_T(u-v) \ln \frac{T}{u-v}
\end{eqnarray*}
in the case $\alpha+\rho=1$.

Further, $$I_2\leq \rho M_T \int_0^{u-v}(u-v-y)^\alpha
y^{\rho-1}{\rm d}y=\rho M_T {\rm
B}(\rho,\alpha+1)(u-v)^{\alpha+\rho}.$$ In the case
$\alpha+\rho>1$ the inequality $(u-v)^{\alpha+\rho}\leq
T^{\alpha+\rho-1}(u-v)$ has to be additionally used.

Finally, $$I_3\leq M_T v^\alpha(u^\rho-v^\rho)\leq M_T
(u^{\alpha+\rho}-v^{\alpha+\rho}).$$ The right-hand side does not
exceed $M_T(u-v)^{\alpha+\rho}$ in the case $0<\alpha+\rho\leq 1$
in view of subadditivity of $x\mapsto x^{\alpha+\rho}$ on
$[0,\infty)$ and $M_T(\alpha+\rho)T^{\alpha+\rho-1}(u-v)$ in the
case $\alpha+\rho>1$.

The proof of Lemma \ref{Ybeta} is complete.
\end{proof}

\section{Applications of Theorem \ref{main}}\label{appl}

In this section we give five examples of particular sequences
$(S_k)_{k\in\mn_0}$ which satisfy limit relation \eqref{basic}
with four different Gaussian processes $W_\alpha$. Throughout the
section we always assume, without further notice, that $h$
satisfies the assumptions of Theorem \ref{main}.

In the case where the sequence $(S_k)_{k\in\mn_0}$ is a.s.\
nondecreasing, the counting process $(N(t))_{t\geq 0}$ is nothing
else but a generalized inverse function for $(S_k)$, that is,
\begin{equation}\label{inv}
N(t)=\inf\{k\in\mn: S_k>t\}\quad\text{a.s.,}\quad t\geq 0.
\end{equation}
In view of this, if a functional limit theorem for $S_{[ut]}$ in
the $J_1$-topology on $D$ holds, and the limit process is a.s.\
continuous, then the corresponding functional limit theorem for
$N(ut)$ in the $J_1$-topology on $D$ is a simple consequence. A
detailed discussion of this fact can be found, for instance, in
\cite{Iglehart+Whitt:1971}. If the sequence $(S_k)_{k\in\mn_0}$ is
not monotone (as, for instance, at point 2 below), then equality
\eqref{inv} is no longer true, and the proof of a functional limit
theorem for $N(ut)$ requires an additional specific argument in
every particular case.

\noindent 1. {\sc Delayed standard random walk.} Let $\xi_1$,
$\xi_2,\ldots$ be i.i.d.\ nonnegative random variables which are
independent of a random variable $S_0$. The possibility that
$S_0=0$ a.s.\ is not excluded. The random sequence
$(S_k)_{k\in\mn_0}$ defined by $S_k:=S_0+\xi_1+\ldots+\xi_k$ for
$k\in\mn_0$ is called {\it delayed standard random walk}. In the
case $S_0=0$ a.s.\ the term {\it zero-delayed standard random
walk} is used. Denote by $(N_0(t))_{t\geq 0}$ the counting process
for a zero-delayed standard random walk. It is well-known (see,
for instance, Theorem 1b(i) in \cite{Bingham:1973}) that

\noindent a) if $\sigma^2:={\rm Var}\,\xi_1\in (0,\infty)$, then
\begin{equation}\label{rela}
\frac{N_0(t\cdot)-\mu^{-1}t(\cdot)}{(\sigma^2\mu^{-3}t)^{1/2}}
\stackrel{J_{1}} \implies B(\cdot) , \quad t \to \infty,
\end{equation}
where $\mu:=\me \xi_1<\infty$, and $(B(u))_{u\geq 0}$ is a
standard Brownian motion (so that relation \eqref{basic} holds
with $b(t)=\mu^{-1}t$ and $a(t)=(\sigma^2\mu^{-3}t)^{1/2}$);

\noindent b) if
\begin{equation} \label{assu2}
\sigma^2=\infty~~\text{and}~~\int_{[0,\,x]}y^2\mmp\{\xi_1\in {\rm
d}y\}\sim L(x),\quad x\to\infty
\end{equation}
for some $L$ slowly varying at $\infty$, then
\begin{equation}\label{rela1}
\frac{N_0(t\cdot)-\mu^{-1}t(\cdot)}{\mu^{-3/2}c(t)}
\stackrel{J_{1}} \implies B(\cdot) , \quad t \to \infty,
\end{equation}
where $c$ is a positive measurable function satisfying
$\lim_{t\to\infty}c(t)^{-2}tL(c(t))=1$ (so that relation
\eqref{basic} holds with $b(t)=\mu^{-1}t$ and
$a(t)=\mu^{-3/2}c(t)$; since $c$ is asymptotically inverse for
$t\mapsto t^2/L(t)$, an application of Proposition 1.5.15 in
\cite{Bingham:1989} enables us to conclude that $c$, hence also
$a$ are regularly varying at $\infty$ of index $1/2$).

The counting process $(N(t))_{t\geq 0}$ for a delayed standard
random walk satisfies the same functional limit theorems which
follows from the limit relation: for all $u\in [0,T]$
$$a(t)^{-1}\sup_{u\in [0,\,T]}(N(tu)-N_0(tu)) \stackrel{\rm{\mathbb{P}}}{\longrightarrow} 0,
\quad t \to \infty,$$ where, depending on the case, either
$a(t)=(\sigma^2\mu^{-3}t)^{1/2}$ or $a(t)=\mu^{-3/2}c(t)$, and
$(N_0(t))_{t\in\mr}$ is the counting process for the corresponding
zero-delayed standard random walk (of course, $N_0(t)=0$ for
$t<0$). The last centered formula is a consequence of the equality
$N(t)=N_0(t-S_0)$, $t\geq 0$, the relation obtained in Lemma A.1
of \cite{Iksanov:2013}
$$a(t)^{-1} \sup_{u \in [0,\,T]}{(N_0(ut)-N_0(ut-h))}
\stackrel{\rm{\mathbb{P}}}{\longrightarrow} 0, \quad t \to
\infty$$ which holds for any positive $h$ and $T$, and the fact
that $S_0$ is independent of $(N_0(t))_{t\in\mr}$. Thus, according
to Theorem \ref{main}, we have both for delayed and zero-delayed
standard random walks
$$\frac{X(t\cdot)-\mu^{-1}\int_0^{t \cdot}{h(y){\textrm
d}y}}{(\sigma^2\mu^{-3}t)^{1/2} h(t)} \stackrel{J_{1}} \implies
\beta
\int_0^{(\cdot)}(\cdot-y)^{\beta-1}B(y)\textrm{d}y=R_\beta(\cdot),\quad
t \to \infty,$$ provided that $\sigma^2\in (0,\infty)$ (in the
case $\beta=0$, the limit process is $R_0=B$), and
$$\frac{X(t\cdot)-\mu^{-1}\int_0^{t \cdot}{h(y){\textrm
d}y}}{\mu^{-3/2}c(t)h(t)} \stackrel{J_{1}} \implies
R_\beta(\cdot), \quad t \to \infty$$ provided that conditions
\eqref{assu2} hold. In particular, irrespective of whether the
variance is finite or not the limit process is a fractionally
integrated Brownian motion with parameter $\beta$. As far as
zero-delayed standard random walks are concerned, the
aforementioned results can be found in Theorem 1.1 (A1, A2) of
\cite{Iksanov:2013}.

\noindent 2. {\sc Perturbed random walks}. Let $(\xi_1, \eta_1)$,
$(\xi_2,\eta_2)\ldots$ be i.i.d.\ random vectors with nonnegative
coordinates. Put $$S_1:=\eta_1,\quad
S_n:=\xi_1+\ldots+\xi_{n-1}+\eta_n,\quad n\geq 2.$$ The so defined
sequence $(S_n)_{n\in\mn}$ is called {\it perturbed random walk}.
Various properties of perturbed random walks are discussed in the
monograph \cite{Iksanov:2016}.

Assume that $\sigma^2={\rm Var}\,\xi_1\in (0,\infty)$ and $\me
\eta^a<\infty$ for some $a>0$. Put $F(x):=\mmp\{\eta_1\leq x\}$
for $x\in\mr$. According to Theorem 3.2 in
\cite{Alsmeyer+Iksanov+Marynych:2017},
$$\frac{N(t\cdot)-\mu^{-1}\int_0^{t\cdot}F(y){\rm d}y}{\sqrt{\sigma^2\mu^{-3}t}}
\stackrel{J_{1}} \implies B(\cdot), \quad t \to \infty,$$ where
$\mu=\me\xi_1<\infty$. Therefore, by Theorem \ref{main},
$$\frac{X(t\cdot)-\mu^{-1}\int_0^{t \cdot} h(y)F(y){\rm
d}y}{(\sigma^2\mu^{-3}t)^{1/2} h(t)} \stackrel{J_{1}}
\implies R_\beta(\cdot),\quad t \to \infty.$$

\noindent 3. {\sc Random walks with long memory.} Let $\xi_1$,
$\xi_2,\ldots$ be i.i.d.\ positive random variables with finite
mean. Assume that these are independent of random variables
$\theta_1$, $\theta_2,\ldots$ which form a centered stationary
Gaussian sequence with $\me \theta_1\theta_{k+1}\sim
k^{2d-1}\ell(k)$ as $k\to\infty$ for some $d\in (0,1/2)$. Put
$S_0:=0$ and
$$S_n-S_{n-1}=\xi_n e^{\theta_n},\quad n\in\mn.$$

Recall that a fractional Brownian motion with Hurst index $H\in
(0,1)$ is a centered Gaussian process $B_H:=(B_H(u))_{u\geq 0}$
with covariance $\me
B_H(u)B_H(v)=2^{-1}(u^{2H}+v^{2H}-(u-v)^{2H})$ for $u,v\geq 0$.
This process has stationary increments and is self-similar of
index $H$. Therefore, for any $p>0$, $$\me
|B_H(u)-B_H(v)|^p=(u-v)^{Hp}\me |B_H(1)|^p,\quad u,v\geq 0.$$
According to the Kolmogorov-Chentsov sufficient conditions, there
exists a version of $B_H$ (which we also denote by $B_H$) which is
a.s.\ H\"{o}lder continuous with exponent smaller than $H-1/p$ for
any $p>0$, hence also, smaller than $H$.

According to Example 4.25 on p.~357 in \cite{Beran et al:2013},
$$\frac{N(t\cdot)-m_1^{-1}t(\cdot)}{(d(2d+1))^{-1/2}m_1^{-3/2-d}m_2
t^{d+1/2}(\ell(t))^{1/2}} \stackrel{J_{1}} \implies
B_{d+1/2}(\cdot),\quad t\to\infty,$$ where $m_1:=\me S_1=\me
\xi_1\me e^{\theta_1}$ and $m_2:=\me\xi_1\me
\theta_1e^{\theta_1}$. An application of Theorem \ref{main} yields
$$\frac{X(t\cdot)-m_1^{-1}\int_0^{t \cdot}{h(y){\textrm
d}y}}{(d(2d+1))^{-1/2}m_1^{-3/2-d}m_2
t^{d+1/2}(\ell(t))^{1/2}h(t)} \stackrel{J_{1}} \implies \beta
\int_0^{(\cdot)}(\cdot-y)^{\beta-1}B_{d+1/2}(y)\textrm{d}y,\quad t
\to \infty$$ if $\beta>0$. If $\beta=0$, the limit process is
$B_{d+1/2}$.

\noindent 4. {\sc Counting process in a branching random walk}.
Assume that the random variables $\xi_k$ defined at point 1 are
a.s.\ positive. Denote by $(N^\prime(t))_{t\geq
0}:=(N_0(t)-1)_{t\geq 0}$ the corresponding renewal process. For
some integer $k\geq 2$, we take in the role of $N(t)$ the number
of the $k$th generation individuals with positions $\leq t$ in a
branching random walk in which the first generation individuals
are located at the points $S_1$, $S_2,\ldots$ (a more precise
definition can be found in Section 1.2 of
\cite{Iksanov+Kabluchko:2018}).

Assume that $\sigma^2={\rm Var}\,\xi_1 \in (0,\infty)$. Theorem
1.3 in \cite{Iksanov+Kabluchko:2018} implies that
$$\frac{N(t\cdot)-(t\cdot)^k/(k!\mu^k)}{((k-1)!)^{-1}\sqrt{\sigma^2\mu^{-2k-1}t^
{2k-1}}} \stackrel{J_{1}} \implies R_{k-1}(\cdot), \quad t \to
\infty,$$ where $\mu=\me\xi_1<\infty$. Of course, for $k=1$ this
limit relation is also valid and amounts to \eqref{rela} as it
must be. By Lemma \ref{Ybeta}, the process $R_{k-1}$ is a.s.\
locally H\"{o}lder continuous with any positive exponent smaller
than $k-1/2$. Thus, Theorem \ref{main} applies and gives
\begin{eqnarray*}
\frac{X(t\cdot)-((k-1)!\mu^k)^{-1}\int_0^{t
\cdot}{h(t\cdot-y)y^{k-1}{\textrm
d}y}}{((k-1)!)^{-1}\sqrt{\sigma^2\mu^{-2k-1}t^{2k-1}}
h(t)}&\stackrel{J_{1}} \implies& \beta \int_0^{(\cdot)}
(\cdot-y)^{\beta-1}R_{k-1}(y)\textrm{d}y\\&=&(k-1){\rm
B}(k-1,\beta+1) R_{\beta+k-1}(\cdot),\quad t \to \infty,
\end{eqnarray*}
where ${\rm B}(\cdot,\cdot)$ is the beta function. The latter
equality can be checked as follows: for $u>0$
\begin{eqnarray*}
\beta \int_0^u (u-y)^{\beta-1}R_{k-1}(y)\textrm{d}y
&=&\beta \int_0^u (u-y)^{\beta-1}(k-1) \int_0^y (y-x)^{k-2}B(x)\textrm{d}x
\textrm{d}y \\
&=&\beta(k-1) \int_0^u B(x) \int_x^ u (u-y)^{\beta-1}(y-x)^{k-2}\textrm{d}y
\textrm{d}x \\
&=&\beta(k-1) \int_0^u B(x) \int_0^{u-x}(u-x-y)^{\beta-1}y^{k-2}\textrm{d}y
\textrm{d}x \\
&=&\beta(k-1){\rm B}(k-1, \beta) \int_0^u (u-x)^{\beta+k-2} B(x) \textrm{d}x \\
&=& \frac{\beta(k-1){\rm B}(k-1, \beta)}{\beta+k-1}
R_{\beta+k-1}(u)\\&=&(k-1){\rm B}(k-1,\beta+1)R_{\beta+k-1}(u).
\end{eqnarray*}

\noindent 5. {\sc Inhomogeneous Poisson process.} Let
$(N(t))_{t\geq 0}$ be an inhomogeneous Poisson process with $\me
N(t)=m(t)$ for a nondecreasing function $m(t)$ satisfying
\begin{equation}\label{asym}
m(t)~\sim~ c t^w,\quad t\to\infty,
\end{equation}
where $c,w>0$. Without loss of generality we can identify
$(N(t))_{t\geq 0}$ with the process $(N^\ast(m(t)))_{t\geq 0}$,
where $(N^\ast(t))_{t\geq 0}$ is a homogeneous Poisson process
with $\me N^\ast(t)=t$, $t\geq 0$. According to \eqref{rela},
\begin{equation}\label{intermed}
\frac{N^\ast (t\cdot)-(t \cdot)}{t^{1/2}} \stackrel{J_{1}}
\implies B(\cdot) , \quad t \to \infty.
\end{equation}

Dini's theorem in combination with \eqref{asym} ensures that
\begin{equation}\label{dini}
\lim_{t\to\infty}\sup_{u\in
[0,\,T]}\Big|\frac{m(tu)}{ct^w}-u^w\Big|=0
\end{equation}
for all $T>0$. It is known (see, for instance, Lemma 2.3 on p.~159
in \cite{Gut:2009}) that the composition mapping
$(x,\varphi)\mapsto (x\circ \varphi)$ is continuous on continuous
functions $x:\mr_+ \to \mr$ and continuous nondecreasing functions
$\varphi: \mr_+\to \mr_+$. Using this fact in conjunction with
\eqref{intermed}, \eqref{dini} and continuous mapping theorem we
infer
\begin{equation}\label{intermed2}
\frac{N(t\cdot)-m(t\cdot)}{(ct^w)^{1/2}}\stackrel{J_{1}} \implies
B((\cdot)^w),\quad t\to\infty.
\end{equation}
Thus, the limit process is a time-changed Brownian motion. An
application of Theorem \ref{main} yields
$$\frac{X(t\cdot)-\mu^{-1}\int_0^{t \cdot} h(y){\rm d}m(y)}{(ct^w)^{1/2}h(t)}
\stackrel{J_{1}} \implies
\beta\int_0^{(\cdot)}(\cdot-y)^{\beta-1}B(y^w){\rm d}y,\quad t \to
\infty$$ if $\beta>0$. If $\beta=0$, the limit process is
$B((\cdot)^w)$.

\section{Proof of Theorem \ref{main}}\label{pro}

To prove weak convergence of finite-dimensional distributions we
need an auxiliary lemma.
\begin{lemma} \label{EZuZv}
Let $f:\mr^+ \times \mr^+ \to (0,\infty)$ be a function which is
nondecreasing in the second coordinate and satisfies $\lim_{t \to
\infty}f(t,x) =x^{\beta}$ for all $x>0$ and some $\beta \geq 0$.
For $t,x\geq 0$, set
$$Z(t,x):=\int_{[0,\,x]}W_\alpha(x-y) {\rm d}_y f(t,y),$$
$$Z(x):=\int_{[0,\,x]}W_\alpha(x-y) {\rm d}y^\beta \text{ for }
\beta > 0 \text{ and } Z(x):= W_\alpha(x) \text{ for } \beta =
0.$$ Then, for any $u,v > 0$, $$\lim_{t \to \infty} \me
Z(t,u)Z(t,v) = \me Z(u)Z(v).$$
\end{lemma}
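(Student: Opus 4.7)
The strategy is to represent both $\me Z(t,u)Z(t,v)$ and $\me Z(u)Z(v)$ as double Riemann--Stieltjes integrals of the covariance kernel $K(s,r):=\me W_\alpha(s)W_\alpha(r)$, and then let $t\to\infty$ via weak convergence of the measures $\textrm{d}_y f(t,y)$ on the compact interval $[0,T]$ (with $T:=u\vee v$). By Fubini (applicable since $K$ is bounded on compacts, which follows from Cauchy--Schwarz plus the Gaussian marginals, and each $f(t,\cdot)$ has finite variation on bounded intervals), one obtains
\[
\me Z(t,u)Z(t,v)=\int_{[0,u]}\!\int_{[0,v]}K(u-x,v-y)\,\textrm{d}_x f(t,x)\,\textrm{d}_y f(t,y),
\]
and analogously for $\me Z(u)Z(v)$, with $\textrm{d}(x^\beta)\,\textrm{d}(y^\beta)$ in place of the product measure when $\beta>0$ and with a unit point mass at $0$ in each coordinate when $\beta=0$.

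Next I would check that $K$ is jointly continuous on $[0,T]^2$. The H\"older estimate \eqref{hold} forces $W_\alpha(x_n)\to W_\alpha(x)$ almost surely whenever $x_n\to x$; since each $W_\alpha(x)$ is Gaussian, this a.s.\ convergence upgrades automatically to $L^2$-convergence (convergence in probability of centered Gaussians entails convergence of all moments), and a Cauchy--Schwarz argument then yields joint continuity of $K$ along with its local boundedness. This continuity is crucial: the passage to the limit in the double integral will take the form of weak convergence of product measures on $[0,T]^2$ tested against the continuous bounded function $(x,y)\mapsto K(u-x,v-y)$.

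The remaining step is to show that the measures $\textrm{d}_y f(t,y)$ on $[0,T]$ converge weakly to $\textrm{d}(y^\beta)$ when $\beta>0$ and to the Dirac mass at the origin when $\beta=0$. For $\beta>0$, P\'olya's theorem (pointwise convergence of nondecreasing functions to a continuous limit is automatically uniform on compacts) gives $f(t,\cdot)\to(\cdot)^\beta$ uniformly on $[0,T]$; one additionally uses $f(t,0)\to 0$, which follows from $f(t,0)\leq f(t,\ve)\to\ve^\beta$ for every $\ve>0$, and then weak convergence of the one-dimensional measures (and hence of their products) is immediate. For $\beta=0$, the total mass $f(t,x)$ tends to $1$ for every $x>0$ while $f(t,x)-f(t,\ve)\to 0$ for $0<\ve<x$, so all the mass concentrates at the origin and the limit measure is $\delta_0$. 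The main technical subtlety is precisely at the boundary $y=0$: for $0<\beta<1$ the limit density $\beta y^{\beta-1}$ is singular there, and for $\beta=0$ the limit measure degenerates to a point mass; both phenomena are absorbed cleanly by working throughout with weak convergence of finite Borel measures on the compact set $[0,T]$ rather than with pointwise convergence of Radon--Nikodym densities.
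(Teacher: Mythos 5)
Your proof is correct and follows essentially the same route as the paper's: both arguments reduce the claim to weak convergence of the finite measures $\mathrm{d}_y f(t,y)$ on a compact interval tested against the jointly continuous, locally bounded covariance kernel of $W_\alpha$, the paper merely phrasing this via auxiliary random variables $Q_t^{(u)},Q_t^{(v)}$ with distribution functions $f(t,\cdot)/f(t,u)$ and justifying boundedness of the kernel by Adler's bound $\me\big(\sup_{z\in[0,T]}W_\alpha(z)\big)^2<\infty$ where you instead upgrade a.s.\ convergence of Gaussians to $L^2$-convergence. Your separate treatment of $\beta=0$ (mass collapsing to $\delta_0$) and of the possible atom at the origin corresponds exactly to the paper's handling through the laws of $Q^{(u)},Q^{(v)}$.
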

\begin{proof}
Fix any $u,v>0$. For each $t>0$, denote by $Q_t^{(u)}$ and
$Q_t^{(v)}$ independent random variables with the distribution
functions
$$\mmp\{Q_t^{(u)} \leq y\} =
\begin{cases}
0, \quad \text{if } y<0, \\
\frac{f(t,y)}{f(t,u)}, \quad \text{if } y \in [0,u],\\
1, \quad \text{if } y>u
\end{cases}
\text{and} \quad \mmp\{Q_t^{(v)} \leq y\} =
\begin{cases}
0, \quad \text{if } y<0, \\
\frac{f(t,y)}{f(t,v)}, \quad \text{if } y \in [0,v],\\
1, \quad \text{if } y>v.
\end{cases}
$$
Also, denote by $Q^{(u)}$ and $Q^{(v)}$ independent random
variables with the distribution functions
$$\mmp\{Q^{(u)} \leq y\} =
\begin{cases}
0, \quad \text{if } y<0, \\
(\frac{y}{u})^{\beta}, \quad \text{if } y \in [0,u],\\
1, \quad \text{if } y>u
\end{cases}
\text{and} \quad \mmp\{Q^{(v)} \leq y\} =
\begin{cases}
0, \quad \text{if } y<0, \\
(\frac{y}{v})^{\beta}, \quad \text{if } y \in [0,v],\\
1, \quad \text{if } y>v.
\end{cases}
$$
By assumption, $$(Q_t^{(u)}, Q_t^{(v)}) \stackrel{{\rm d}} \to
(Q^{(u)}, Q^{(v)}), \quad t \to \infty.$$

Define the function $r(x,y):=\me W_\alpha(x)W_\alpha(y)$ on
$\mr^+\times \mr^+$. Using a.s.\ continuity of $W_\alpha$,
Lebesgue's dominated convergence theorem and the fact that,
according to Theorem 3.2 on p.~ 63 in \cite{Adler}, $\me(\sup_{z
\in [0,\,T]}W_\alpha(z))^2<\infty$ we conclude that $r$ is
continuous, hence also bounded on $[0,T] \times [0,T]$ for all
$T>0$. This entails
$$ r(u-Q_t^{(u)}, v-Q_t^{(v)}) \stackrel{{\rm d}}{\longrightarrow} r(u-Q^{(u)}, v-Q^{(v)}),\quad t \to \infty$$
and thereupon $$\lim_{t \to \infty} \me r(u-Q_t^{(u)},
v-Q_t^{(v)}) = \me r(u-Q^{(u)}, v-Q^{(v)})$$ by Lebesgue's
dominated convergence theorem. Further,
\begin{eqnarray*}
\me Z(t,u)Z(t,v)
&=&f(t,u)f(t,v)\int_{[0,\,u]}\int_{[0,\,v]} \me W_\alpha(u-y)W_\alpha(v-z) \textrm{d}_y \left(\frac{f(t,y)}{f(t,u)}\right) \textrm{d}_z \left(\frac{f(t,z)}{f(t,v)}\right) \\
&=& f(t,u)f(t,v)\me r \left(u-Q_t^{(u)}, v-Q_t^{(v)}\right)
\underset{t \to \infty}{\longrightarrow} (uv)^{\beta} \me r
\left(u-Q^{(u)}, v-Q^{(v)}\right).
\end{eqnarray*}
It remains to note that while in the case $\beta > 0$ we have
\begin{eqnarray*}
&&(uv)^{\beta} \me r (u-Q^{(u)}, v-Q^{(v)})\\&=& \int_{[0,\,u]}\int_{[0,\,v]} r(u-y, v-z) \textrm{d}_y \left(u^{\beta} \rm{\mathbb{P}}\{Q^{(u)} \leq y\} \right) \textrm{d}_z \left(v^{\beta} \rm{\mathbb{P}}\{Q^{(v)} \leq z\}\right) \\
&=& \int_{[0,\,u]}\int_{[0,\,v]}r(u-y, v-z) \textrm{d} y^{\beta}
\textrm{d} z^{\beta}=\me \int_{[0,\,u]}W_\alpha(u-y) \textrm{d} y^{\beta} \int_{[0,\,v]}W_\alpha(v-z) \textrm{d} z^{\beta} \\
&=& \me Z(u)Z(v),
\end{eqnarray*}
in the case $\beta = 0$ we have
$$(uv)^{\beta} \me r (u-Q^{(u)}, v-Q^{(v)}) = r(u,v) = \me W_\alpha(u)W_\alpha(v) = \me Z(u)Z(v).$$
The proof of Lemma \ref{EZuZv} is complete.
\end{proof}

\begin{proof}[Proof of Theorem \ref{main}]
Since $h$ is eventually nondecreasing, there exists $t_0>0$ such
that $h(t)$ is nondecreasing for $t>t_0$. Being a regularly
varying function of nonnegative index, $h$ is eventually positive.
Hence, increasing $t_0$ if needed we can ensure that $h(t)>0$ for
$t>t_0$. We first show that the behavior of the function of
bounded variation $h$ on $[0,t_0]$ does not affect weak
convergence of the general shot noise process. Once this is done,
we can assume, without loss of generality, that $h(0)=0$ and that
$h$ is nondecreasing on $\mr^+$.

Integrating by parts yields
\begin{eqnarray}
X(tu)-\int_{[0,\,tu]}(h(tu-y){\rm d}
b(y)&=&\int_{[0,\,u]}(h(t(u-y)){\rm
d}_y(N(ty)-b(ty))\notag\\&=&(h(tu)-h((tu)-))(N(0)-b(0))\notag\\&+&\int_{(0,\,u]}(N(ty)-b(ty))){\rm
d}_y(-h(t(u-y)))\label{aux2}
\end{eqnarray}
For all $T>0$, $$\frac{\sup_{u\in
[0,\,T]}|h(tu)-h((tu)-)||N(0)-b(0)|}{a(t)h(t)}\leq
\frac{h(tT)}{h(t)}\frac{|N(0)-b(0)|}{a(t)} \tp 0,\quad
t\to\infty$$ because $h$ is regularly varying at $\infty$. Denote
by $\mathbb{V}_0^{t_0}(h)$ the total variation of $h$ on
$[0,t_0]$. By assumption, $\mathbb{V}_0^{t_0}(h)<\infty$. For all
$T>0$,
$$\frac{\sup_{u\in [0,\,T]}\int_{[u-t_0/t,\,u)}(N(ty)-b(ty)){\rm
d}_y(-h(t(u-y)))}{a(t)h(t)}\leq \frac{\sup_{u\in
[0,\,T]}|N(tu)-b(tu)|}{a(t)}\frac{\mathbb{V}_0^{t_0}(h)}{h(t)}\tp
0$$ as $t\to\infty$. The convergence to $0$ is justified by the
facts that, according to \eqref{basic}, the first factor converges
in distribution to $\sup_{u\in [0,\,T]}|W_\alpha(u)|$, whereas the
second trivially converges to $0$. Recall that, when $\beta=0$,
$\lim_{t\to\infty}h(t)=\infty$ holds by assumption, whereas, when
$\beta>0$, it holds automatically. Thus, as was claimed, while
investigating the asymptotic behavior of the second summand on the
right-hand side of \eqref{aux2} we can and do assume that $h(0)=0$
and that $h$ is nondecreasing on $\mr^+$.

Skorokhod's representation theorem ensures that there exist
versions $(\hat N(t))_{t \geq 0}$ and $(\hat W_\alpha(t))_{t \geq
0}$ of the processes $(N(t))_{t \geq 0}$ and $(W_\alpha(t))_{t
\geq 0}$ such that, for all $T>0$,
\begin{equation}\label{unif}
\lim_{t \to \infty} \sup_{u \in [0,\,T]} |\hat
W_\alpha^{(t)}(u)-\hat W_\alpha(u)|=0 \quad \text{a.s.},
\end{equation}
where $\hat W_\alpha^{(t)}(u):=\frac{\hat N(tu)-b(tu)}{a(t)}$ for
$t>0$ and $u\geq 0$. For each $t>0$, set $h_t(x):=h(tx)/h(t)$,
$x\geq 0$,
$$X_t(u):=(a(t))^{-1}\int_{(0,\,u]}(N(ty)-b(ty))){\rm
d}_y(-h_t(u-y)),\quad u\geq 0$$ and
\begin{eqnarray*}
\hat X^\ast_t(u):=
\int_{(0,\,u]} \hat W_\alpha^{(t)}(y)\textrm{d}_y
(-h_t(u-y)),\quad u\geq 0.
\end{eqnarray*}
The distributions of the processes $(X_t(u))_{u\geq 0}$ and $(\hat
X_t^\ast(u))_{u\geq 0}$ are the same. Hence, it remains to check
that
\begin{equation}\label{thConv1}
\lim_{t \to \infty} \int_{(0,\,u]} (\hat W_\alpha^{(t)}(y)- \hat
W_\alpha(y))\textrm{d}_y (-h_t(u-y))= 0\quad \text{a.s.}
\end{equation}
in the $J_1$-topology on $D$ and
\begin{equation}    \label{thConv2}
\int_{(0,\,\cdot]} W_\alpha(y)\textrm{d} (-h_t(\cdot-y))
\stackrel{J_{1}} \implies Y_{\alpha,\beta}(\cdot), \quad t \to
\infty.
\end{equation}
In view of \eqref{unif} and monotonicity of $h_t$, we have, for
all $T>0$ as $t \to \infty$,
\begin{eqnarray*}
&& \sup_{u \in [0,\,T]} \left|\int_{(0,\,u]} \left(\hat W_\alpha^{(t)}(u)-\hat W_\alpha(u)\right)\textrm{d}_y (-h_t(u-y))\right| \\
&\leq& \sup_{u \in [0,\,T]} \left|\hat W_\alpha^{(t)}(u)-\hat
W_\alpha(u)\right| h_t(T) \longrightarrow 0\quad \text{a.s.}
\end{eqnarray*}
which proves \eqref{thConv1}.

Since $W_\alpha$ is a Gaussian process, the convergence of the
finite-dimensional distributions in \eqref{thConv2} which is
equivalent to the convergence of covariances follows from Lemma
\ref{EZuZv}. While applying the lemma we use the equalities
$$Y_{\alpha,\,\beta}(u)=\int_{(0,\,u]} W_\alpha(y)\textrm{d}_y
(-(u-y)^\beta)$$ when $\beta>0$ and
$Y_{\alpha,\,\beta}(u)=W_\alpha(u)$ when $\beta=0$. Our next step
is to prove tightness on $D[0,T]$, for all $T>0$, of
$$\hat{X}_t(u):=\int_{(0,\,u]} W_\alpha(y) \textrm{d}_y (-h_t(u-y))=\int_{[0,\,u)} W_\alpha(u-y) \textrm{d}_y h_t(y),
\quad u \geq 0.$$ By Theorem 15.5 in \cite{Billingsley}, it is
enough to show that, for any $r_1,r_2>0$, there exist
$t_0,\delta>0$ such that, for all $t \geq t_0$,
\begin{equation}    \label{IneqTightness}
{\rm{\mathbb{P}}}\{\sup_{0 \leq u,v\leq T, |u-v|\leq \delta}
|\hat{X}_t(u)-\hat{X}_t(v)| > r_1\} \leq r_2.
\end{equation}
Put $l:=\max(u,v)$. Recalling that $W_\alpha(s)=0$ for $s<0$ (see
the beginning of Section \ref{hold2}) we have, for $0\leq u,v\leq
T$ and $|u-v|\leq \delta$,
\begin{eqnarray*}
|\hat{X}_t(u)-\hat{X}_t(v)|
&=& \left| \int_{[0,\,l)}( W_\alpha(u-y)-W_\alpha(v-y)) \textrm{d}_y h_t(y) \right| \\
&\leq& M_T |u-v|^\alpha h_t(T) \leq M_T |u-v|^{\alpha}\lambda
\end{eqnarray*}
for large enough $t$ and a positive constant $\lambda$. The
existence of $\lambda$ is justified by the relation $\lim_{t
\to\infty}h_t(T)=T^\beta<\infty$. Decreasing $\delta$ if needed,
we ensure that inequality \eqref{IneqTightness} holds for any
positive $r_1$ and $r_2$. The proof of Theorem \ref{main} is
complete.
\end{proof}

\end{document}